\crefname{equation}{}{}
\crefname{lemma}{Lemma}{Lemmas}
\crefname{page}{p.}{pp.}
\numberwithin{equation}{section}
\theoremstyle{plain}
\newtheorem{theorem}{Theorem}[section]
\newtheorem{proposition}{Proposition}[section]
\newtheorem{definition}{Definition}[section]
\theoremstyle{definition}
\def\now{%
\minute=\time%
\hour=\time \divide \hour by 60%
\hourMins=\hour \multiply\hourMins by 60%
\advance\minute by -\hourMins%
\zeroPadTwo{\the\hour}:\zeroPadTwo{\the\minute}%
}
\def\zeroPadTwo#1{\ifnum #1<10 0\fi#1}
\renewcommand{\cite}{\citet}
\def\^#1{\ifmmode {\mathaccent"705E #1} \else {\accent94 #1} \fi}
\def\~#1{\ifmmode {\mathaccent"707E #1} \else {\accent"7E #1} \fi}
\def\*#1{#1^\ast}
\edef\-#1{\noexpand\ifmmode {\noexpand\bar{#1}} \noexpand\else \-#1\noexpand\fi}
\def\>#1{\vec{#1}}
\def\.#1{\dot{#1}}
\def\wt#1{\widetilde{#1}}
\def\atop{\@@atop}
\def\*#1{\mathscr{#1}}
\renewcommand{\leq}{\leqslant}
\renewcommand{\geq}{\geqslant}
\newcommand{\eq}{\eqref}
\newcommand{\IE}{\mathbbm{E}}
\newcommand{\IR}{\mathbb{R}}
\def\be#1{\begin{equation*}#1\end{equation*}}
\def\ben#1{\begin{equation}#1\end{equation}}
\def\beqn#1\eeqn{\begin{align}#1\end{align}}
\def\beq#1\eeq{\begin{align*}#1\end{align*}}
\def\E{{\IE}}
\newcommand{\mcl}[1]{\mathcal{#1}}
\def\blfootnote{\xdef\@thefnmark{}\@footnotetext}
\begin{document}

\title{
Second-order Approximation of Exponential Random Graph Models
}
\author{Wen-Yi Ding and Xiao Fang}
\date{\it Soochow University and The Chinese University of Hong Kong} 
\maketitle

\noindent{\bf Abstract:} 
Exponential random graph models (ERGMs) are flexible probability models allowing edge dependency. However, it is known that, to a first-order approximation, many ERGMs behave like Erd\"os--R\'enyi random graphs, where edges are independent. In this paper, to distinguish ERGMs from Erd\"os--R\'enyi random graphs, we consider second-order approximations of ERGMs using two-stars and triangles. We prove that the second-order approximation indeed achieves second-order accuracy in the triangle-free case. The new approximation is formally obtained by Hoeffding decomposition and rigorously justified using Stein's method.

\medskip

\noindent{\bf AMS 2020 subject classification: }  60B10, 05C80

\noindent{\bf Keywords and phrases:} Erd\"os--R\'enyi graph, exponential random graphs, Hoeffding decomposition, second-order approximation, Stein's method. 

\section{Introduction}

\subsection{Background}

Exponential random graph models (ERGMs) are frequently used as parametric statistical models in network analysis; see, for example, \cite{wasserman1994social}. We refer to \cite{bhamidi2011mixing} and \cite{chatterjee2013estimating} for the history and the following formulation of ERGM. Let $\mcl{G}_n$ be the space of all simple graphs\footnote{In this paper, simple graphs mean undirected graphs without self-loops or multiple edges} on $n$ labeled vertices. ERGM assigns probability
\begin{equation}\label{ERGM}
p_{\beta}(G)=\frac{1}{Z(\beta)}\exp\left\{ n^2 \sum_{i=1}^k \beta_i t(H_i, G)\right\}
\end{equation}
to each $G\in \mcl{G}_n$, where $\beta=(\beta_1,\dots, \beta_k)$ is a vector of real parameters, $H_1,\dots, H_k$ are (typically small) simple graphs without isolated vertices, $|\text{hom}(H,G)|$ denotes the number of homomorphisms of $H$ into $G$ (i.e., edge-preserving injective maps $V(H)\to V(G)$, where $V(H)$ and $V(G)$ are the vertex sets of $H$ and $G$ respectively), $|\cdot|$ denotes cardinality when applied to a set,
\be{
t(H, G):=\frac{|\text{hom}(H,G)|}{n^{|V(H)|}}
}
denotes the homomorphism density, and $Z(\beta)$ is a normalizing constant. The $n^2$ and $n^{|V(H)|}$ factors in \eq{ERGM} ensure a nontrivial large $n$ limit. In this paper, we always take $H_1$ to be an edge and assume $\beta_2,\dots, \beta_k$ are positive ($\beta_1$ can be negative), although our formal approximation also applies to negative $\beta$'s. 
Note that if $k=1$, then \eq{ERGM} is the Erd\"os--R\'enyi model $G(n,p)$ where every edge is present with probability $p=e^{2\beta_1}(1+e^{2\beta_1})^{-1}$, independent of each other. If $k\geq 2$, \eq{ERGM} "encourages" the presence of the corresponding subgraphs. 
For example, if $k=2$ and $H_2$ is a rectangle, then \eq{ERGM} becomes
\begin{equation}\label{rect}
p_{\beta_1, \beta_2}(G)=\frac{1}{Z(\beta_1, \beta_2)} \exp\left\{2\beta_1 E +\frac{8\beta_2}{n^2}\square \right\},
\end{equation}
where $E$ and $\square$ denote the number of edges and rectangles in the graph $G$, respectively, and the constants 2 and 8 are the number of automorphisms of an edge and a rectangle, respectively.

Because of the non-linear nature of \eq{ERGM}, ERGMs are notoriously more difficult to analyze than classical exponential families of distributions. The ground-breaking works by \cite{bhamidi2011mixing} and \cite{chatterjee2013estimating} reveal the following fact: In a certain parameter region (called the \emph{subcritical} region), \eq{ERGM} behaves like an Erd\"os--R\'enyi random graph $G(n, p)$ in a suitable sense. Here, $0<p:=p(\beta)<1$ is determined by the parameters $\beta$ in \eq{ERGM}. In fact, the subcritical region can fill the whole space except for a set of parameters with Lebesgue measure zero; see, for example, the two-star ERGM studied in \cite{mukherjee2023statistics}. Moreover, ERGMs with different parameters can lead to the same $p$; see \cite[Fig.~2]{chatterjee2013estimating} for the case of the triangle ERGM.

\subsection{Second-order expansion}

Because ERGMs may be indistinguishable from $G(n, p)$ to a first-order approximation, we consider the second-order approximation in this paper. For this purpose, we first explain a new way of obtaining $p$ in the first-order approximation using Hoeffding decomposition (\cite{hoeffding1948}). 
Taking \eq{rect} as an example, we rewrite it as
\begin{equation}\label{rect1}
p_{\beta_1, \beta_2}(G)=\frac{1}{Z(\beta_1, \beta_2)} \exp\left\{\Big[2\beta_1+8\beta_2 p^3\Big] E +\frac{8\beta_2}{n^2}\Big[\square-n^2 p^3 E \Big] \right\}.
\end{equation}
Suppose $p$ satisfies
\begin{equation}\label{rect2}
2\Phi(p):=2\beta_1+8\beta_2p^3= \log \frac{p}{1-p}.
\end{equation}
Then, up to a constant factor independent of $G$, the probability of $G$ is proportional to
\begin{equation}\label{rect3}
p_{\beta_1, \beta_2}(G)\propto  \exp\left\{\frac{8\beta_2}{n^2}\Big[\square-n^2 p^3 E \Big] \right\} p^E (1-p)^{n-E}.
\end{equation}
What appears inside the brackets $[\dots]$ in \eq{rect3} is the number of rectangles $\square$ subtracted by its approximate leading term in the Hoeffding decomposition\footnote{We remark that the term $n^2p^3 E$ should be $(n-2)(n-3)p^3 E$ in the Hoeffding decomposition. However, the difference is negligible for our purpose. This remark applies to all the approximate Hoeffding decompositions considered in this paper.} under the Erd\"os--R\'enyi model $G(n,p)$. 
Therefore, in a suitable sense (cf. \eq{rect5}), we can hope the ERGM in \eq{rect} to be close to $G(n,p)$
whose probability distribution is
\ben{\label{p1rect}
p_1(G):=p_{1,\beta_1,\beta_2}(G)\propto p^E (1-p)^{n-E}
}
for $p$ satisfying \eq{rect2}. The equation \eq{rect2} is exactly what appears in \cite{bhamidi2011mixing} and \cite{chatterjee2013estimating}, although they used different methods to derive it, namely, Glauber dynamics (\cite{glauber1963time}) and non-linear large deviations (\cite{chatterjee2011large}), respectively. The subcritical region in this case contains parameters $\beta_1, \beta_2$ such that there is a unique solution $p\in (0,1)$ to \eq{rect2}
and $\varphi'(p)<1$, where
\begin{equation}\label{varph}
\varphi(a):=\frac{e^{2\Phi(a)}}{e^{2\Phi(a)}+1}, \quad \Phi(a):=\beta_1+4\beta_2 a^3.    
\end{equation}

Next, we use the above idea to formally derive a second-order approximation for \eq{rect}. See \cref{sec:formal} for the derivation for the general case. Taking into account the next-order term in the approximate Hoeffding decomposition, we have
\begin{equation}\label{rect4}
\begin{split}
p_{\beta_1, \beta_2}(G)\propto&  \exp\left\{\frac{8\beta_2}{n^2}\Big[\square-n p^2\big(V-2np E \big)-n^2 p^3 E \Big] \right\} \\
&\times \exp\left\{ \frac{8\beta_2 p^2}{n} \big[V-2np E \big]  
\right\} p^E (1-p)^{n-E},
\end{split}
\end{equation}
where $V$ denotes the number of two-stars (subgraphs with three vertices and two edges connecting them) in $G$.
Because two-term decomposition further reduces variance, it is natural to use the two-star ERGM
\begin{equation}\label{p2rect}
p_2(G):=p_{2,\beta_1,\beta_2}(G)\propto \exp\left\{ \frac{8\beta_2 p^2}{n} \big[V-2n p E \big]  
\right\} p^E (1-p)^{n-E}
\end{equation}
as a second-order approximation to \eq{rect}.
Two-star ERGMs are highly tractable; see \cite{mukherjee2023statistics} and the earlier work by \cite{park2004statistical}.

Note that Hoeffding decomposition is done under the Erd\"os--R\'enyi model; it was initially unclear whether the remaining term of the decomposition in \cref{rect4} still has a smaller-order variance under the two-star ERGM \eq{p2rect}. Fortunately, we can justify \eq{p2rect} indeed achieves second-order accuracy as explained next. See more details in \cref{sec:result}.

\subsection{Justification and discussion}

We identify a simple graph on $n$ labeled vertices $\{1,\dots, n\}$ with an element $x=(x_{ij})_{1\leq i<j\leq n}\in \{0,1\}^{\mcl{I}}$, where $\mcl{I}:=\{(i,j): 1\leq i<j\leq n\}$ and $x_{ij}=1$ if and only if there is an edge between vertices $i$ and $j$. In this way, the ERGM \eq{rect} induces a random element $X\in \{0,1\}^{\mcl{I}}$. Similarly, \eq{p1rect} and \eq{p2rect} induce a random element $Z$ (first-order approximation) and $Y$ (second-order approximation), respectively, in $\{0,1\}^{\mcl{I}}$. Let $h: \{0,1\}^{\mcl{I}}\to \IR$ be a test function. For $x\in \{0,1\}^{\mcl{I}}$ and $s\in {\mcl{I}}$, define $x^{(s,1)}$ to have 1 in the $s$th coordinate and otherwise the same as $x$, and define $x^{(s,0)}$
similarly except there is a 0 in the $s$th coordinate. Define
\begin{equation}\label{deltah}
\Delta_s h(x):=h(x^{(s,1)})-h(x^{(s,0)}), \quad \|\Delta h\|:=\sup_{x\in \{0,1\}^{\mcl{I}}, s\in {\mcl{I}}}|\Delta_s h(x)|.
\end{equation}
Applying the general result by \cite[Theorem 1.13]{reinert2019approximating} to \eq{rect} in the subcritical region, we have
\begin{equation}\label{rect5}
|\E h(X)-\E h(Z)|\leq C \|\Delta h\|n^{3/2},
\end{equation}
where the constant $C$ depends only on $\beta_1, \beta_2$. The upper bound in \eq{rect5} is, in fact, sharp as will be shown in \cref{prop:sharp}.
Applying the general result in \cref{sec:result}, we have, in a subset of the subcritical region,
\begin{equation}\label{rect6}
|\E h(X)-\E h(Y)|\leq C \|\Delta h\|n.
\end{equation}
Comparing \eq{rect5} with \eq{rect6}, we see that the second-order approximation \eq{p2rect} indeed achieves second-order accuracy.  

The proof of \eq{rect6} relies on recent advances in understanding ERGMs. According to \cite{mukherjee2023statistics}, it appears that the two-star ERGM only changes marginal edge probabilities from $p$ to $p+O(1/n)$ (cf. \cref{eq:MX23}). The $1/\sqrt{n}$ order effect is moving edges to form two-stars (cf. the proof of \cref{prop:sharp}). Therefore, the Hoeffding decomposition under $G(n,p)$ is still suitable for our purpose. We then extend the method of \cite{reinert2019approximating} and \cite{bresler2019stein} to second-order approximations. While the first-order bound needed in \cite{reinert2019approximating} is easier to obtain using the independence structure of $G(n,p)$, our second-order bound requires working under $p_2$ such as in \eq{p2rect}. We make use of the results of \cite{ganguly2019sub} and \cite{sambale2020logarithmic} to obtain the desired bound. Due to our dependence on the works of \cite{mukherjee2023statistics} and \cite{sambale2020logarithmic}, we restrict ourselves to triangle-free ERGMs and a subset of the subcritical region in \cref{thm:main} in \cref{sec:result}.
While the extension to the triangle case may be merely a technical problem, we feel that the naive extension of the above expansion to third or higher-order approximations may not be valid. This is because changing marginal edge probabilities by an amount of order $1/n$ already incurs too much error for the third-order expansion.

As a related problem, according to \cite[Theorem~15]{bresler2018optimal}, there is a statistical test based on one sample of a random graph to distinguish ERGM and the Erd\"os--R\'enyi model. Based on a naive consideration comparing variances, it seems that no test can distinguish ERGMs with their second-order approximations with high probability using a constant number of i.i.d.\ samples. We leave a more careful study in this direction to future work.
 
While this work may have potential applications in network analysis, we do not pursue them within the scope of this paper. For instance, in the field of information dissemination, owing to distinct purposes and environmental conditions, we may aim to encourage specific shapes and scales within the fundamental grid units of information dissemination. The results in this paper suggest that forming two-stars and triangles could be the initial step in achieving these goals.

\subsection{Organization}

The rest of the paper is organized as follows. In \cref{sec:formal}, we derive the formal second-order approximation to general ERGMs in the subcritical region. In \cref{sec:result}, we rigorously justify the approximation in a subset of the subcritical region for triangle-free ERGMs.

\section{Formal expansion}\label{sec:formal}

In this section, we derive the formal second-order approximation to general ERGMs in the subcritical region.

Consider the ERGM \eq{ERGM} in the space $\mcl G_n$ of all simple graphs on $n$ labeled vertices. Suppose $H_1$ is an edge, $\beta_2,\dots, \beta_k$ are all positive parameters.
Define (cf. \eq{varph})
\begin{equation}\label{varph1}
\Phi(a):=\sum_{i=1}^k \beta_i |E(H_i)| a^{|E(H_i)|-1},\quad \varphi(a):=\frac{e^{2\Phi(a)}}{e^{2\Phi(a)}+1},
\end{equation}
where $|E(H_i)|$ is the number of edges in the subgraph $H_l$.
The so-called subcritical region (cf. \cite{bhamidi2011mixing} and \cite{chatterjee2013estimating}) contains all the parameters $\beta=(\beta_1,\dots, \beta_k)$ such that there is a unique solution $p:=p(\beta)$ to the equation $\varphi(a)=a$ in $(0,1)$ and $\varphi'(p)<1$. We always use $p$ to denote the unique solution in the rest of the paper. It can be verified that $p$ satisfies
\begin{equation}\label{eq:p}
2\Phi(p)=\log(\frac{p}{1-p}).
\end{equation}

To formally derive the second-order approximation to the ERGM \eq{ERGM}, we will apply the  Hoeffding decomposition to subgraph counting functions of the Erd\"os--R\'enyi graph $G(n,p)$.
We refer to \cite[Example~2 (cont.) and Lemma 1 on page 344]{janson1991asymptotic} for details. 
Let $\{Z_{ij}: 1\leq i<j\leq n\}$ be i.i.d.\ Bernoulli($p$) random variables representing the presence of $N:={n\choose 2}$ edges. Define $Z_{ji}:=Z_{ij}$ for $1\leq i<j\leq n$.
For $m$ distinct indices $s_1,\dots, s_m\in \mcl{I}:= \{(i,j): 1\leq i<j\leq n\}$, we have the following orthogonal decomposition (sum of uncorrelated mean-zero terms)
\begin{equation}\label{eq:hoeff1}
Z_{s_1}\dots Z_{s_m}-\E Z_{s_1}\dots Z_{s_m}=\sum_{l=1}^m p^{m-l}\sum_{1\leq i_1<\dots i_l\leq m}(Z_{s_{i_1}}-p)\dots (Z_{s_{i_l}}-p).
\end{equation}
In fact, the equation can be checked by the binomial theorem and the uncorrelatedness of summands follows from the independence of the $Z$'s.

Let $E, V$ and $\triangle$ denote the number of edges, two-stars and triangles, respectively, in the graph $G$. If $G$ is random, then $E, V$ and $\triangle$ are regarded as random variables. We subtract the leading terms in their (approximate) Hoeffding decomposition under $G(n, p)$ and define
\begin{equation}\label{eq:wtE}
\wt E:=E-\E_p E=\sum_{1\leq i<j\leq n}(Z_{ij}-p),
\end{equation}
\begin{equation}\label{eq:wtV}
\wt V:=V-\E_p V-2np \wt E\approx \sum_{1\leq i<j<k\leq n}\big[(Z_{ij}-p)(Z_{ik}-p)+(Z_{ji}-p)(Z_{jk}-p)+(Z_{ki}-p)(Z_{kj}-p)\big],
\end{equation}
\begin{equation}\label{eq:wtT}
\wt \triangle:=\triangle-\E_p \triangle -p\wt V-n p^2 \wt E\approx \sum_{1\leq i<j<k\leq n}(Z_{ij}-p)(Z_{ik}-p)(Z_{jk}-p),
\end{equation}
where $\E_p$ denotes the expectation with respect to the Erd\"os--R\'enyi random graph $G(n,p)$.
Since we are interested in large $n$ behavior, for simplicity of notation, we again used an approximate Hoeffding decomposition by neglecting some smaller order terms (namely $n-l\approx n$ for bounded constants $l$).
From \cref{eq:wtE,eq:wtV,eq:wtT}, the number of homomorphisms of an edge to $G$ is
\be{
2E=\E_p (2E)+2\wt E;
}
the number of homomorphisms of a two-star to $G$ is
\be{
2V=\E_p (2V)+2\wt V+4np \wt E;
}
the number of homomorphisms of a triangle to $G$ is
\be{
6 \triangle =\E_p (6\triangle)+6\wt \triangle+6p\wt V +6np^2\wt E.
}
For a general subgraph $H_i$ with $v_i\geq 2$ vertices, $e_i\geq 1$ edges, $s_i\geq 0$ two-stars and $t_i\geq 0$ triangles, 
the leading terms (involving $\wt \triangle, \wt V, \wt E$) in the approximate Hoeffding decomposition of $|\text{hom}(H,G)|$ are
\ben{\label{eq:leading}
\E_p |\text{hom}(H_i,G)|+n^{v_i-3} 6t_i p^{e_i-3}\wt \triangle+n^{v_i-3} 2s_i p^{e_i-2}\wt V +n^{v_i-2} 2e_i p^{e_i-1}\wt E.
}
In fact, from \cref{eq:hoeff1}, it can be checked by combinatorics that the full Hoeffding decomposition for $|\text{hom}(H_i, G)|$, where $H_i$ is a graph with $v_i$ vertices and $e_i$ edges, is
\begin{equation}\label{eq:fulldecomp}
|\text{hom}(H_i,G)|=\E_p|\text{hom}(H_i, G)|+\sum_{G_{ij}\subset H_i} (n-v_{ij})\dots(n-v_i+1) p^{e_i-e_{ij}}  |\text{hom}(G_{ij}, H_i)| \wt{\#(G_{ij}, G)},
\end{equation}
where the sum is over all distinct, nonempty, simple subgraphs $G_{ij}$ of $H_i$ without isolated vertices, $v_{ij}$ and $e_{ij}$ are the number of vertices and edges of $G_{ij}$, respectively, $\wt{\#(G_{ij}, G)}$ denotes the number of (not necessarily induced) copies of $G_{ij}$ in $G$ except that each edge indicator $Z_s$, $s\in \mcl{I}$, is replaced by  $(Z_s-p)$ (cf. the right-hand sides of \cref{eq:wtE,eq:wtV,eq:wtT} for the cases of edges, two-stars and triangles).

From \eq{eq:leading}, the ERGM \eq{ERGM} can be rewritten as
\begin{equation}\label{eq:expansion}
\begin{split}
p_\beta(G)=&\frac{1}{Z(\beta)}\exp\left\{ n^2 \sum_{i=1}^k \beta_i t(H_i, G)\right\}\\
=&\frac{1}{Z(\beta)}\exp\Big\{ Const.+Remain.\Big\}\\
&\times \exp\Big\{ \big(\sum_{i=1}^k \beta_i t_i p^{e_i-3} \big)\frac{6\wt \triangle}{n}+\big(\sum_{i=1}^k \beta_i s_i p^{e_i-2} \big)\frac{2\wt V}{n}\Big\}\exp\Big\{\big(\sum_{i=1}^k \beta_i e_i p^{e_i-1} \big)2\wt E\Big\},
\end{split}
\end{equation}
where $Const.$ is a constant not depending on $G$, $Remain.$ are the remaining terms after subtracting the three leading terms in the approximate Hoeffding decomposition. Because $p$ satisfies \eq{eq:p}, the last factor in \eq{eq:expansion} is proportional to 
$p^E (1-p)^{n-E}$.
Therefore, it is not hard to believe that $G(n,p)$ whose probability distribution is
\ben{\label{eq:p1}
p_1(G):=p_{1,\beta}(G)\propto p^E (1-p)^{n-E}
}
for $p$ satisfying \eq{eq:p} provides a first-order approximation to the ERGM \eq{ERGM}. It is known that the next-order terms in the Hoeffding decomposition are represented by subgraphs with three vertices; this can be checked by direct computation ; see also \cite[Lemma~2]{janson1991asymptotic} for a more general result allowing additional random variables to be associated with vertices. Therefore, based on \cref{eq:expansion}, it is natural to propose the following second-order approximation to ERGM.

\begin{definition}\label{def:second}
For the ERGM \eq{ERGM} in the subcritical region and $p$ satifying \eq{eq:p}, the second-order approximation is given by a random graph whose probability distribution is
\begin{equation}\label{eq:p2}
p_2(G):=p_{2,\beta}(G)\propto \exp\Big\{ \big(\sum_{i=1}^k \beta_i t_i p^{e_i-3} \big)\frac{6\wt \triangle}{n}+\big(\sum_{i=1}^k \beta_i s_i p^{e_i-2} \big)\frac{2\wt V}{n}\Big\}p^E (1-p)^{n-E}
\end{equation}
for $G\in \mcl{G}_n$.
\end{definition}

\section{Justification of the formal expansion}\label{sec:result}

Recall that we identify a simple graph on $n$ labeled vertices $\{1,\dots, n\}$ with an element $x=(x_{ij})_{1\leq i<j\leq n}\in \{0,1\}^{\mcl{I}}$, where ${\mcl{I}}:={\mcl{I}_n}:=\{(i,j):1\leq i<j\leq n\}$ and $x_{ij}=1$ if and only if there is an edge between vertices $i$ and $j$. In this way, the ERGM \eq{ERGM} induces a random element $X\in \{0,1\}^{\mcl{I}}$. Similarly, \eq{eq:p1} and \eq{eq:p2} induces a random element $Z$ (first-order approximation) and $Y$ (second-order approximation), respectively, in $\{0,1\}^{\mcl{I}}$. Let $h: \{0,1\}^{\mcl{I}}\to \IR$ be a test function. 
Recall the definition of $\|
\Delta h\|$ in \eq{deltah}.

Using Stein's method, \cite{reinert2019approximating} justified the first-order approximation of the ERGM \eq{ERGM} by the Erd\"os--R\'enyi graph $p_1$ in \eq{eq:p1} in the subcritical region by proving \begin{equation}\label{eq:p1bound}
|\E h(X)-\E h(Z)|\leq C \|\Delta h\|n^{3/2},
\end{equation}
where $C:=C(\beta, H)$ is a constant depending only on the parameters $\beta_1,\dots, \beta_k$ and subgraphs $H_1,\dots, H_k$ in the definition of the ERGM. The bound \eq{eq:p1bound} turns out to be sharp as shown in the next proposition.
\begin{proposition}\label{prop:sharp}
When $X$ is the rectangle ERGM \eq{rect} in the subcritical region and $Z$ is the Erd\"os--R\'enyi graph $G(n,p)$ with $p$ satisfying \cref{rect2}, there exist some parameter values $\beta_1, \beta_2$ satisfying $\Phi'(1)<2$, where $\Phi(a)$ was defined in \cref{varph}, and a sequence of functions $h_n: \{0,1\}^{\mcl{I}_n}\to \IR$ such that 
\begin{equation*}
    \liminf_{n\to \infty} \frac{|\E h_n(X)-\E h_n(Z)|}{\|\Delta h_n\|n^{3/2}}>0.
\end{equation*}
\end{proposition}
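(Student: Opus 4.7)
The plan is to exhibit a sequence of test functions $h_n$ witnessing the claimed $n^{3/2}$-scale sharpness. Guided by the formal expansion in \cref{sec:formal} and the structure of the two-star ERGM in \eqref{p2rect}, the leading source of discrepancy between the rectangle ERGM $X$ and the Erd\"os--R\'enyi graph $Z$ is the centered two-star count $\wt V$ from \eqref{eq:wtV}, since the second-order approximation $Y$ differs from $G(n,p)$ precisely by the exponential tilt $\exp\klr{\tfrac{8\beta_2 p^2}{n}\wt V}$. I would accordingly build $h_n$ around $\wt V$.

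First I would estimate the mean shift $\E\wt V(X)-\E\wt V(Z)$. Using the second-order theorem of \cref{sec:result} to reduce $\E\wt V(X)$ to $\E\wt V(Y)$ up to error $O(n\|\Delta\wt V\|)$, an exponential-tilt expansion with $\lambda:=8\beta_2 p^2/n$, combined with the Hoeffding orthogonality $\Cov_Z(\wt V,\wt E)=0$ and the explicit variance $\Var_Z(\wt V)=3\binom{n}{3}p^2(1-p)^2$, gives
\[\E_Y\wt V-\E_Z\wt V \;=\; \lambda\,\Var_Z(\wt V)\,(1+o(1)) \;=\; 4\beta_2 p^4(1-p)^2\,n^2\,(1+o(1)).\]
For $\beta_2\neq 0$ satisfying the admissibility condition $\Phi'(1)<2$ the leading coefficient is nonzero, so $|\E\wt V(X)-\E\wt V(Z)|$ is genuinely of order $n^2$.

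Next I would design $h_n$ so that its discrete derivative matches the right scale. The naive choice $h_n=\wt V$ has $\|\Delta\wt V\|$ of order $n$ in the worst case (from $\Delta_s\wt V(x)=D_i(x)+D_j(x)-2np$ at degree-extremal graphs), giving only a ratio of order $1/\sqrt n$. Observing however that $\Delta_s\wt V$ is typically of order $\sqrt n$ by the degree CLT, I would take $h_n$ to be a smoothly truncated variant $\wt V\cdot\psi$, where $\psi$ cuts off the atypical-degree event $\{\max_i|D_i-np|> K\sqrt n\}$. The concentration bounds for the two-star ERGM from \cite{sambale2020logarithmic} and the joint estimates of \cite{mukherjee2023statistics} imply that the cutoff event has super-polynomially small probability under both $X$ and $Z$, so the truncation preserves the $\Theta(n^2)$ shift while, in principle, reducing the sup-norm discrete derivative to the typical scale $\Theta(\sqrt n)$; this yields the required $|\E h_n(X)-\E h_n(Z)|/(\|\Delta h_n\|\,n^{3/2})=\Theta(1)$.

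The main obstacle is the quantitative analysis of the truncation boundary: a single edge flip near the boundary of the good event could in principle induce a jump of order $|\wt V|\cdot|\Delta\psi|$, which must be suppressed to $O(\sqrt n)$ in the sup norm rather than only in expectation. This forces $\psi$ to be Hamming-Lipschitz at the scale $1/\sqrt n$ while still excluding all configurations where $\Delta_s\wt V$ can be of order $n$, and simultaneously requires sharp control of $|\wt V|$ on the transition region of $\psi$ via the joint concentration estimates under $X$, $Y$, and $Z$. The assumption $\Phi'(1)<2$ is used here to place the parameters in the regime where the needed estimates from \cite{mukherjee2023statistics} and \cite{sambale2020logarithmic} apply.
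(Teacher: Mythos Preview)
Your approach differs substantially from the paper's, and the obstacle you flag at the end is in fact fatal to the construction $h_n=\wt V\cdot\psi$ as written.

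The key miscalculation is the size of $|\wt V|$ on the ``good'' degree set. Writing $d_i=(n-1)p+\epsilon_i$ one finds
\[
\wt V \;=\; \text{const.} + O(1)\cdot\wt E + \tfrac12\sum_{i=1}^n \epsilon_i^2,
\]
so even when every $|\epsilon_i|\leq K\sqrt n$, the quadratic term can reach order $n^2$. Worse, your own mean-shift computation shows $\E_Y\wt V$ is genuinely of order $n^2$, so under $Y$ the typical value of $\wt V$ on the good set is $\Theta(n^2)$, not $\Theta(n^{3/2})$. Consequently the boundary term $\wt V\cdot\Delta_s\psi$ with $|\Delta_s\psi|\asymp n^{-1/2}$ is of order $n^{3/2}$ in sup norm, and $\|\Delta h_n\|$ cannot be pushed below $n^{3/2}$. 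The resulting ratio is $n^2/(n^{3/2}\cdot n^{3/2})=n^{-1}\to 0$, which gives nothing. A secondary issue: the linearised tilt expansion $\E_Y\wt V-\E_Z\wt V\approx\lambda\Var_Z(\wt V)$ is not justified, since $\lambda\wt V\asymp n^{1/2}$ under $Z$; the order $n^2$ is correct but the constant needs a different argument.

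The paper sidesteps the boundary problem by truncating \emph{per vertex} rather than globally: it takes
\[
h_n(x)=\sum_{i=1}^n \min\!\Bigl\{\bigl(\tfrac{2d_i}{n-1}-2p+\tfrac{W_i}{\sqrt{n\wt\beta_2/2}}\bigr)^2,\;\tfrac{M}{n}\Bigr\},
\]
with auxiliary i.i.d.\ Gaussians $W_i$ (averaged out). Because each summand is individually capped at $M/n$ and an edge flip touches only two degrees, one gets $\|\Delta h_n\|=O(n^{-3/2})$ directly, with no transition-region term. The limits $\E h_n(Z)\to 4p(1-p)+2/\wt\beta_2$ and $\E h_n(Y)\to 1/\wt a_1$ are then read off from the degree-variance CLT for the two-star model in \cite{mukherjee2023statistics}, and a concrete choice of $(\beta_1,\beta_2)$ shows the two limits differ. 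The reduction from $X$ to $Y$ uses \cref{thm:main}. If you want to salvage your route, the fix is precisely this: replace $\wt V$ by the per-vertex decomposition $\tfrac12\sum_i\epsilon_i^2$ with each term truncated at its own scale.
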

We prove \cref{prop:sharp} at the end of this section. The next theorem is our main result, which shows that the proposed second-order approximation in \cref{def:second} indeed achieves second-order accuracy for triangle-free ERGMs in a subset of the subcritical region (where $\beta_2,\dots, \beta_k$ are sufficiently small positive constants). 

\begin{theorem}\label{thm:main}
For the ERGM \cref{ERGM},
assume $\Phi'(1)<2$ (known as the Dobrushin's uniqueness region), where $\Phi(a)$ was defined in \cref{varph1}. Assume in addition that $s_i=0$ for all $2\leq i\leq k$ (all the $H_i$'s are triangle-free). Then, we have
\begin{equation}\label{eq:p2bound}
|\E h(X)-\E h(Y)|\leq C \|\Delta h\|n,
\end{equation}
where $C:=C(\beta, H)$ is a constant depending only on the parameters $\beta_1,\dots, \beta_k$ and the subgraphs $H_1,\dots, H_k$ in the definition of the ERGM.
\end{theorem}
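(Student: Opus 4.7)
The plan is to extend the Glauber-dynamics Stein's method of \cite{reinert2019approximating} and \cite{bresler2019stein}, using the second-order approximation $p_2$ from \cref{def:second} as the reference measure in place of $G(n,p)$. For a probability measure $\pi$ on $\{0,1\}^{\mcl{I}}$, write $q_\pi(s, x) := \pi(X_s = 1 \mid X_{-s} = x_{-s})$ and let $\mcl{L}_\pi f(x) := \sum_{s \in \mcl{I}}(q_\pi(s, x) - x_s)\Delta_s f(x)$ denote the associated Glauber generator. Take $f$ to solve the Stein equation $\mcl{L}_{p_2} f = h - \E h(Y)$. Since $X$ is stationary for $\mcl{L}_{p_\beta}$, we obtain the exchange formula
$$\E h(X) - \E h(Y) \;=\; \E\sum_{s \in \mcl{I}}\bigl(q_{p_2}(s, X) - q_{p_\beta}(s, X)\bigr)\,\Delta_s f(X),$$
and the theorem reduces to (i) a Stein regularity estimate $\|\Delta f\| \le C\|\Delta h\|$, and (ii) an $O(1/n)$ uniform bound on $\E|q_{p_2}(s, X) - q_{p_\beta}(s, X)|$.

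For (i), the Dobrushin uniqueness assumption $\Phi'(1) < 2$ is chosen precisely so that the Dobrushin interaction matrix of $p_2$ has row sums strictly less than one; the Glauber chain of $p_2$ is then a Hamming contraction and a standard telescoping coupling along the chain, as in \cite{bresler2019stein}, delivers the gradient bound. For (ii), a direct computation from the ERGM density gives
$$\log\frac{q_{p_\beta}(s, x)}{1 - q_{p_\beta}(s, x)} \;=\; \sum_{i=1}^k 2\beta_i e_i\,n^{2 - v_i}\,N_i^{(s)}(x),$$
where $N_i^{(s)}(x)$ counts edge-preserving injections of $H_i$ into $x$ that send a distinguished edge of $H_i$ to $s$. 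Hoeffding-decomposing each $N_i^{(s)}$ about $G(n, p)$, the constant term sums (across $i$) to $\log(p/(1-p))$ by \cref{eq:p}, and the combinatorial identity $\sum_{(a,b) \in E(H_i)}(\deg(a) + \deg(b) - 2) = 2 s_i$ forces the linear-in-fluctuation term to sum to $(2/n)\bigl(\sum_i \beta_i s_i p^{e_i - 2}\bigr)\bigl(\Delta_s V(x) - 2np\bigr)$. This is exactly the exponent hard-wired into $p_2$ in \cref{def:second}, so the same two leading orders appear in $q_{p_2}(s, x)$. Consequently $q_{p_2}(s, X) - q_{p_\beta}(s, X)$ is a sum of Hoeffding remainders of degree $\ge 2$ in the centered edge indicators, each carrying a prefactor of order $1/n$.

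To bound these remainders in expectation under the ERGM, I would combine the logarithmic Sobolev inequality of \cite{sambale2020logarithmic} for triangle-free ERGMs in the Dobrushin region with the sub-Gaussian subgraph-count tail bounds of \cite{ganguly2019sub}, yielding $\E|q_{p_2}(s, X) - q_{p_\beta}(s, X)| = O(1/n)$ uniformly in $s$. Summing over $|\mcl{I}| = \binom{n}{2}$ edges and inserting (i) produces the target $O(n)$ bound of \cref{eq:p2bound}; the triangle-free hypothesis enters only to enable \cite{sambale2020logarithmic}. A subsidiary step, handled via \cite{mukherjee2023statistics}, is to verify that the $O(1/n)$ drift between the true marginal edge probability under $p_2$ and the Erd\"os--R\'enyi parameter $p$ is absorbed into the higher-order remainder and does not pollute the cancellation of the first two orders.

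The main obstacle is the Stein regularity estimate. In the first-order comparison of \cite{reinert2019approximating}, the independent coordinates under $G(n,p)$ permit $\|\Delta f\|$ to be controlled by an explicit formula. Here the reference measure $p_2$ has genuinely dependent edges, so the Lipschitz bound on the Stein solution must be produced dynamically from Dobrushin contraction, and the assumption $\Phi'(1) < 2$ is tight for this step. The triangle-free restriction is not intrinsic to the Stein comparison itself but reflects the present scope of the concentration inputs; extending to triangles would require new concentration estimates for cubic polynomials of ERGMs.
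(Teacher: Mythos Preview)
Your proposal and the paper's proof are both instantiations of the Reinert--Ross/Bresler--Nagaraj Glauber--Stein framework, but with the roles of the two measures swapped. You take the Stein solution for $p_2$ and therefore evaluate the remainder $\sum_s \E\bigl|q_{p_2}(s,X)-q_{p_\beta}(s,X)\bigr|$ under the full ERGM $X$; the paper takes the Stein solution for $p_\beta$ (the ERGM), invokes Reinert--Ross' Dobrushin bound for $p_\beta$ under $\Phi'(1)<2$, and evaluates the analogous remainder $\sum_s \E\bigl|\Delta_s L(Y)-\Delta_s \wt L(Y)\bigr|$ under the two-star model $Y\sim p_2$. The Hoeffding decomposition you sketch is the same one the paper carries out (their Step~2), and the estimate $\E(f_{d,A})^2\le C\|A\|_2^2$ from \cite{sambale2020logarithmic} is the concentration ingredient in both cases. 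So the core mechanism is identical; what differs is which measure you center under.

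The paper's direction buys something concrete: because all moment computations are under $Y$, which is a two-star ERGM, the precise marginal estimate $|\E Y_l-p|\le C/n$ from \cite{mukherjee2023statistics} and the conditional decoupling bound \cite[Eq.~(34)]{ganguly2019sub} are available off the shelf, and this is exactly where the triangle-free hypothesis bites (it forces $p_2$ to be a pure two-star model so that Mukherjee--Xu applies). In your direction these moments live under $X$, the general ERGM; your final paragraph invokes \cite{mukherjee2023statistics} ``to verify the $O(1/n)$ drift \dots\ under $p_2$'', but that is the wrong measure for your setup---you need the drift $|\E_{p_\beta} X_l - p|\le C/n$ under $p_\beta$. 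That bound is obtainable (e.g.\ via \cite{ganguly2019sub} in the Dobrushin region), but Mukherjee--Xu does not supply it. Once this is corrected your route goes through; it is a genuine dual of the paper's argument rather than a different idea.
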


\begin{proof}[Proof of \cref{thm:main}]

In this proof, let $C:=C(\beta, H)$ denote positive constants depending only on the parameters $\beta_1,\dots, \beta_k$ and the subgraphs $H_1,\dots, H_k$ in the definition of the ERGM and may differ from line to line.

Recall from \cref{varph1} that 
\begin{equation*}
\Phi(a)=\sum_{i=1}^k \beta_i e_i a^{e_i-1},\quad e_i=|E(H_i)|.      
\end{equation*}
We have
\begin{equation*}
\Phi'(a)=\sum_{i=2}^k \beta_i e_i (e_i-1) a^{e_i-2},\quad \Phi'(1)=\sum_{i=2}^k \beta_i e_i (e_i-1).
\end{equation*}
Let $\Phi_2(a)$ be the corresponding function to the second-order approximation \cref{eq:p2} with $t_i=0$ for all $i$ because of the triangle-free condition. Then,
\begin{equation*}
\Phi_2'(1)=\sum_{i=2}^k \beta_i 2s_i p^{e_i-2}.
\end{equation*}
From $e_i(e_i-1)\geq 2s_i$ and the condition that $\Phi'(1)<2$, we have 
\begin{equation}\label{eq:forSS20}
    \Phi_2'(1)<2.
\end{equation}
It can be checked that the condition $\Phi'(1)<2$ is stronger than the subcritical condition; see \cite[Remark~2.3]{ganguly2019sub}. Therefore, both the ERGM \cref{ERGM} and \cref{eq:p2} are in the subcritical region under the condition of \cref{thm:main}.

{\bf Step 1. The general bound.}

Recall the one-to-one correspondence between a graph $G$ and its edge representation vector $x$. Recall also that $X$ and $Y$ are the edge representation of the random graphs \cref{ERGM} and \cref{eq:p2}, respectively.
Straightforward modification of \cite[Theorems~1.1]{reinert2019approximating} and using their Lemma~3.2 for their value of $\rho$ in 
Dobrushin's uniqueness region
lead to the following bound
\begin{equation*}
|\E h(X)-\E h(Y)|\leq C \|\Delta h\|\sum_{s\in \mcl{I}} \E|\Delta_s L(Y)-\Delta_s \wt L(Y)|,
\end{equation*}
where the sum is over $\mcl{I}=\{(i,j): 1\leq i<j\leq n\}$ and for each $s$, the differencing operator $\Delta_s$ was defined in \cref{deltah} and (recall \cref{ERGM}, the last line of \cref{eq:expansion}, \cref{eq:p2} and $t_i=0$ for all $i$ in the triangle-free case)
\begin{equation*}
    L(x)=n^2\sum_{i=1}^k \beta_i \frac{|\text{hom}(H_i, G)|}{n^{v_i}},
\end{equation*}
\begin{equation*}
    \wt L(x)=\big(\sum_{i=1}^k \beta_i s_i p^{e_i-2} \big)\frac{2\wt V}{n} +\big(\sum_{i=1}^k \beta_i e_i p^{e_i-1} \big) 2\wt E.
\end{equation*}
To prove \cref{thm:main}, it suffices to show, for any edge $s$ connecting two vertices $a$ and $b$,
\begin{equation}\label{eq:aim1}
    \E|\Delta_s(L(Y)-\wt L(Y))|\leq \frac{C}{n}.
\end{equation}
In the following proof, we fix the edge $s$.

{\bf Step 2. Hoeffding decomposition under $p_2$.}

From their expressions above, we have
\begin{equation*}
    L(x)-\wt L(x)=\sum_{i=1}^k \frac{\beta_i}{n}\left\{\frac{|\text{hom}(H_i, G)|}{n^{v_i-3}}-s_i p^{e_i-2} 2\wt V-n e_i p^{e_i-1}2\wt E \right\}=:\sum_{i=1}^k\frac{\beta_i}{n} R_i(x).
\end{equation*}
To prove \cref{eq:aim1}, it suffices to prove, for any $1\leq i\leq k$,
\begin{equation}\label{eq:aim2}
    \E|\Delta_s R_i(Y)|\leq C.
\end{equation}

\noindent\textbf{Case 1.} If $H_i$ is an edge, then $R_i(x)=Const.$, where $Const.$ denote constants which are independent of $x$ and hence will vanish after taking the differencing operator $\Delta_s$.

\noindent\textbf{Case 2.} If $H_i$ is a two-star, then, from \cref{eq:wtV},
\begin{equation*}
    R_i(x)=2V-2\wt V-4np\wt E=Const.
\end{equation*}

\noindent\textbf{Case 3.} Under the triangle-tree condition, the only remaining possibilities for $H_i$ are simple graphs with $v_i\geq 4$. For Case 3, from \cref{eq:fulldecomp}, we have
\begin{align*}
    R_i(x)=&Const.+\sum_{G_{ij}\subset H_i:\atop v_{ij}\geq 4} \frac{(n-v_{ij})\cdots (n-v_i+1) p^{e_i-e_{ij}}|\text{hom}(G_{ij}, H_i)|}{n^{v_i-3}}\wt{\# (G_{ij}, G)}\\
    &+\left[\frac{(n-3)\cdots(n-v_i+1)}{n^{v_i-3}}-1\right]p^{e_i-2}(2s_i)\wt V\\
    &+\left[\frac{(n-2)\cdots(n-v_i+1)}{n^{v_i-3}}-n\right]p^{e_i-1}(2e_i)\wt E\\
    =:&Const.+R_{i1}(x)+R_{i2}(x)+R_{i3}(x),
\end{align*}
where, as in \cref{eq:fulldecomp}, the sum is over all distinct subgraphs $G_{ij}$ of $H_i$ without isolated vertices, $v_{ij}$ and $e_{ij}$ are the number of vertices and edges of $G_{ij}$, respectively. Note that under the triangle-free condition of the theorem, the terms $R_{i2}(x)$ and $R_{i3}(x)$ above correspond to the cases $v_{ij}=3$ and $v_{ij}=2$, respectively.

Because $\Delta_s\wt E=1$ and $|\frac{(n-2)\cdots(n-v_i+1)}{n^{v_i-3}}-n|\leq C$, we have $\E|R_{i3}(Y)|\leq C$.

Because $|\Delta_s \wt V|\leq Cn$ and $|\frac{(n-3)\cdots(n-v_i+1)}{n^{v_i-3}}-1|\leq C/n$, we have $\E|R_{i2}(Y)|\leq C$. In fact, $\E|R_{i2}(Y)|=o(1)$, but we don't need it here.

To prove \cref{eq:aim2}, it remains to prove
\begin{equation}\label{eq:aim3}
    \E|\Delta_s R_{i1}(Y)|\leq C.
\end{equation}
Therefore, we fix $G_{ij}\subset H_i$ with $v_{ij}\geq 4$. From the definitions of $\wt{\#(G_{ij},G)}$ below \cref{eq:fulldecomp} and $\Delta_s$, we have
\begin{equation}\label{eq:deltas}
    \Delta_s \wt{\#(G_{ij},G)}=\wt{\#(G_{ij}^{(s)}, G)},
\end{equation}
which is the number of copies of $G_{ij}$ in $G$ containing the edge $s$, except that each remaining edge indicator $Z_r$ is replaced by $(Z_r-p)$.

As in  \cite{sambale2020logarithmic}, as building blocks of the Hoeffding decomposition under $p_2$, we define
\begin{equation}\label{eq:hoeffp2}
    f_{d,A}=\sum_{I\in \mcl{I}^d} A_I \sum_{P\in \mcl{P}(I)}(-1)^{M(P)} \left\{\prod_{J\in P\atop |J|=1}  (Y_J-\E Y_J)\right\} \prod_{J\in P\atop |J|>1} \left\{ \E \prod_{l\in J} (Y_l-\E Y_l) \right\},
\end{equation}
where $d\geq 1$ is an integer, $\mcl{I}=\{(i,j): 1\leq i<j\leq n\}$, $A$ is a $d$-tensor with vanishing diagonal (i.e., $A_I>0$ only if all the $d$ elements in $I$ are distinct),
\begin{equation*}
    \mcl{P}(I)=\{S\subset 2^I: S\ \text{is a partition of}\ I\},
\end{equation*}
$M(P)$ is the number of subsets with more than one element in the partition $P$ and for a singleton set $J=\{l\}$, $Y_J:=Y_l$. For our purpose, it is enough to consider those values of $d$ bounded by the maximum number of edges of subgraphs $H_1,\dots, H_k$ in the following.

From \cite[Theorem~3.7]{sambale2020logarithmic}\footnote{They assumed that the subgraphs $H_1,\dots, H_k$ in the definition of ERGM are all connected at the beginning of their paper. However, as far as we checked their proof, this requirement is not needed.}, under the condition \cref{eq:forSS20},
\begin{equation}\label{eq:SS20}
   \E (f_{d,A})^2\leq C \|A\|_2^2, 
\end{equation}
where $\|A\|_2$ is the Euclidean norm of the tensor $A$ when viewed as a vector.

Next, we express \cref{eq:deltas} in terms of \cref{eq:hoeffp2}. We need the following facts. From \cite[Eq.(34)]{ganguly2019sub}, for any fixed $m\geq 1$ and distinct edges $l_1,\dots, l_m$, we have, in the subcritical region,
\begin{equation}\label{eq:GN19}
    |\E(Y_{l_1} \vert Y_{l_2},\dots, Y_{l_m})-\E Y_{l_1}|\leq \frac{C}{n}.
\end{equation}
From \cite[Lemma~3.3(d)]{mukherjee2023statistics}, we have, in the subcritical region,
\begin{equation*}
    |\E\bar \phi-t|\leq \sqrt{\E(\bar \phi-t)^2}\leq C/n,
\end{equation*}
where $t=2p-1+O(1/n)$ (see their Lemma~1.2(a)) and $\E \bar\phi=\E(\phi_1+\dots+\phi_n)/n=2\E Y_l-1$ for any $l\in \mcl{I}$ (using their Eqs.~(8) and (10), the symmetry and a connection between the total degree and the total number of edges of a graph). Therefore, for any $l\in \mcl{I}$,
\begin{equation}\label{eq:MX23}
    |\E Y_l-p|\leq \frac{C}{n}.
\end{equation}

Decomposing according to the union $L$ of singleton sets in the partition $P$ in \cref{eq:hoeffp2}, we write
\begin{equation*}
    f_{d, A}=\sum_{I\in \mcl{I}^d} A_I \sum_{L\subset I} C_L\left\{\prod_{l\in L} (Y_l-\E Y_l)\right\},
\end{equation*}
where $C_L=1$ if $L=I$ and, from \eq{eq:GN19}, $C_L=O(1/n)$ if $L\ne I$ (in the case that $L$ is the empty set, the product $\prod_{l\in L}$ is understood to be equal to 1). From \cref{eq:MX23}, we further rewrite $f_{d, A}$ as
\begin{equation}
    f_{d, A}=\sum_{I\in \mcl{I}^d} A_I \sum_{L\subset I} C_L' \left\{\prod_{l\in L} (Y_l-p)\right\},
\end{equation}
where $C_L'=1$ if $L=I$ and $C_L'=O(1/n)$ if $L\ne I$.
From \cref{eq:deltas}, for $d=e_{ij}-1$ (total number of edges of $G_{ij}$ subtracted by one fixed edge $s$) and some $\{A_I: I\in \mcl{I}^d\}$ with uniformly bounded entries,
\begin{align}\label{eq:hoeff3}
    &\Delta_s \wt{\#(G_{ij},G)}=\wt{\#(G_{ij}^{(s)}, G)}\nonumber\\
    =&\sum_{I\in \mcl{I}^d} A_I \prod_{l\in I} (Y_l-p) \nonumber\\
    =&f_{d,A}-\sum_{I\in \mcl{I}^d} A_I \sum_{L\subset I:\atop L\ne I} C_L' \left\{\prod_{l\in L} (Y_l-p)\right\}.
\end{align}

{\bf Step 3. Final bound.}

Using the the expression \cref{eq:hoeff3}, we now turn to proving \cref{eq:aim3} for each $G_{ij}\subset H_i$ with $v_{ij}\geq 4$.

First of all, 
\begin{equation}\label{eq:final1}
    \E \left|\frac{(n-v_{ij})\cdots(v-v_i+1)p^{e_i-e_{ij}}|\text{hom}(G_{ij}, H_i)|}{n^{v_i-3}} f_{d,A} \right|\leq \frac{C n^{v_i-v_{ij}}}{n^{v_i-3}} \sqrt{n^{v_{ij}-2}}\leq C,
\end{equation}
where the first inequality is from \cref{eq:SS20} and the fact that the number of non-zero entries of $\{A_I: I\in \mcl{I}^d\}$ involved is of the order $O(n^{v_{ij}-2})$ (the number of choices of the remaining $v_{ij}-2$ vertices of $G_{ij}$ after fixing the two vertices $a, b$ connected by $s$) and the second inequality is because $v_{ij}\geq 4$.

Next, for the remaining terms in \cref{eq:hoeff3}, repeated using the derivation for \cref{eq:hoeff3} and expressing $\prod_{l\in L}(Y_l-p)$ using $f_{d, A}$ with $d\leq |L|$, we obtain the upper bound (recall $C_L'=O(1/n)$)
\begin{align*}
    & \E\left|\frac{(n-v_{ij})\cdots(v-v_i+1)p^{e_i-e_{ij}}|\text{hom}(G_{ij}, H_i)|}{n^{v_i-3}} \sum_{I\in \mcl{I}^d} A_I \sum_{L\subset I:\atop L\ne I} C_L' \left\{\prod_{l\in L} (Y_l-p)\right\}  \right|\\
    \leq &\frac{C}{n^{v_{ij}-3}}\frac{1}{n}\sum_{0\leq v\leq v_{ij}-2} n^v \sqrt{n^{v_{ij}-2-v}},
\end{align*}
where the sum is over all possible number $v$ of isolated vertices of $G_{ij}$ after removing some edges (but keeping at least one edge to map to $s$), $n^v$ is the order of the number of choices of these $v$ isolated vertices, and $n^{v_{ij}-2-v}$ comes from an argument using \cref{eq:SS20} as for the first inequality in \cref{eq:final1}.
For each $0\leq v\leq v_{ij}-2$, we have
\begin{equation*}
    \frac{C}{n^{v_{ij}-3}}\frac{1}{n} n^v \sqrt{n^{v_{ij}-2-v}}\leq C n^{-v_{ij}/2+v/2+1}\leq C.
\end{equation*}
Therefore, \cref{eq:aim3} is proved and the theorem follows.
\end{proof}

\begin{proof}[Proof of \cref{prop:sharp}]
We identify a graph $G$ on $n$ vertices with a ${n\choose 2}$-dimensional vector $x$ as at the beginning of this section. Similarly, a random graph corresponds to a random vector in the same way.
Let $X$ follow the rectangle ERGM \cref{rect}
with probability
\begin{equation*}
    p_{\beta_1,\beta_2}(G)\propto \exp\{\frac{8\beta_2}{n^2}\square+2\beta_1 E \},
\end{equation*}
$Z$ follow the Erd\"os--R\'enyi model $G(n,p)$ with $p$ in \cref{rect2}, and
$Y$ follow the two-star ERGM \cref{p2rect}
\begin{equation*}
    p_2(G)\propto \exp\{\frac{2\wt \beta_2}{n} V+2\wt \beta_1 E\},
\end{equation*}
where, by a rewriting of \cref{p2rect},
\begin{equation*}
    \wt \beta_2=4\beta_2 p^2,\ \wt \beta_1=-8 \beta_2 p^3+\frac{1}{2} \log (\frac{p}{1-p}).
\end{equation*}
For each $n$, choose the test function $h_n(x)$ to be 
\begin{align}\label{eq:defhn}
    h_n(x):=&\E\sum_{i=1}^n \Big\{\big(\frac{2d_i(G)}{n-1}-2p+\frac{W_i}{\sqrt{n\wt \beta_2/2}} \big)^2 1_{\{(\frac{2d_i(G)}{n-1}-2p+\frac{W_i}{\sqrt{n\wt \beta_2/2}})^2\leq \frac{M}{n}\}}\nonumber \\
    &\qquad\qquad +\frac{M}{n}1_{\{(\frac{2d_i(G)}{n-1}-2p+\frac{W_i}{\sqrt{n\wt \beta_2/2}})^2> \frac{M}{n}\}}  \Big\},
\end{align}
where $d_i(G)$ is the degree of the vertex $i$ in the graph $G$, $W_1,\dots, W_n$ are i.i.d.\ standard normal random variables independent of everything else, the expectation is with respect to the $W$'s, $1_{\{\cdots\}}$ is the indicator function, and $M$ is a sufficiently large constant to be chosen below (for \cref{eq:truncation} and \cref{eq:claim5}). From \cref{thm:main}, for $\beta_1, \beta_2$ satisfying $\Phi'(1)<2$, we have $|\E h_n(X)-\E h_n(Y)|\leq C \|\Delta h_n\|n$. Therefore, to prove the proposition, it suffices to show
\begin{equation}\label{eq:suff1}
\liminf_{n\to \infty} \frac{|\E h_n(Y)-\E h_n(Z)|}{\|\Delta h_n\|n^{3/2}}>0
\end{equation}
for some parameter values $\beta_1, \beta_2$ satisfying $\Phi'(1)<2$.
Because changing an edge only changes $d_i(G)$ by 1 for two out of $n$ vertices and because of the trunction in the definition of $h_n(x)$, we have $\|\Delta h_n\|\leq C/n^{3/2}$. Therefore, to prove \cref{eq:suff1}, it suffices to show
\begin{equation}\label{eq:suff2}
    \liminf_{n\to \infty} |\E h_n(Y)-\E h_n(Z)|>0
\end{equation}
for some parameter values $\beta_1, \beta_2$ satisfying $\Phi'(1)<2$.

We first consider the simpler term $\E h_n(Z)$. Because $Z$ follows $G(n,p)$, the degree $d_i(Z)$ of each vertex $i$ follows the binomial distribution $Bin(n-1,p)$. Recall $W_i\sim N(0,1)$ are independent normal variables independent of everything else. We have
\begin{equation}\label{eq:suff3}
    \limsup_{n\to \infty} \E h_n(Z)\leq \limsup_{n\to \infty} \E \sum_{i=1}^n (\frac{2d_i(Z)}{n-1}-2p+\frac{W_i}{\sqrt{n \wt\beta_2/2}})^2=4p(1-p)+\frac{2}{\wt \beta_2}.
\end{equation}

Next, we consider $\E h_n(Y)$. In fact, \cite[Lemma~2.2(a)]{mukherjee2023statistics} implies the following

\noindent\textbf{Claim.} 
For sufficiently large $M$,
$\liminf_{n\to \infty} \E h_n(Y)$ is sufficiently close to $1/\wt a_1$, where
\begin{equation*}
    \wt a_1=\wt \theta-\wt \theta^2 (1-\wt t^2),\ \wt \theta=\frac{\wt \beta_2}{2},\ \wt t=2p-1.
\end{equation*}

Take, for example, $\beta_2=0.16$ (which satisfies $\Phi'(1)<2$), $\beta_1=-0.08$, $p=0.5$, then \cref{rect2} is satisified. We can compute $1/\wt a_1=13.58696>13.5=4p(1-p)+2/\wt \beta_2$.
Then \cref{eq:suff2} follows from \cref{eq:suff3} and the above claim. Therefore,   the proposition follows.

We are left to prove the above claim.

\noindent\textbf{Proof of Claim.}
As in \cite{mukherjee2023statistics}, define random variables 
\begin{equation*}
    \phi_i:=\frac{2d_i(Y)}{n-1}-1+\frac{W_i}{\sqrt{(n-1)\theta}},\  1\leq i\leq n, \quad  \theta:=\frac{\wt \beta_2 (n-1)}{2n}.
\end{equation*}
From \cite[page~6 in the supplementary material]{mukherjee2023statistics}, $\phi:=(\phi_1,\dots, \phi_n)$ has joint probability density function $f_n(\phi)$ and 
\begin{equation*}
    \frac{(n-1) \lambda_2}{2} \sum_{i=1}^n\left(\phi_i-t\right)^2 \leq-\log f_n(\phi) \leq \frac{(n-1) \lambda_1}{2} \sum_{i=1}^n\left(\phi_i-t\right)^2,
\end{equation*}
where $\lambda_1, \lambda_2$ are two positive constants depending only on $\beta_1, \beta_2$ and $t=2p-1+O(1/n)$ (see \cite[Lemma~1.2(a)]{mukherjee2023statistics}).
From a straightforward modification of the proof of \cite[Lemma~3.3(b)]{mukherjee2023statistics}, we obtain, for a sufficiently large constant $M$,
\begin{equation}\label{eq:truncation}
    \E\sum_{i=1}^n  (\phi_i-t)^2 1_{\{\sum_{i=1}^n (\phi_i-t)^2>M\}} \to 0\ \text{as}\ n\to \infty.
\end{equation} 
In fact, for any $M>0$,
$$
\begin{aligned}
&\E\sum_{i=1}^n (\phi_i-t)^2 1_{\{\sum_{i=1}^n (\phi_i-t)^2>M\}}\\  & =\frac{\int_{\mathbb{R}^n} e^{-f_n(\phi)} \sum_{i=1}^n (\phi_i-t)^21_{\left\{\sum_{i=1}^n\left(\phi_i-t\right)^2> M\right\}} d \phi}{\int_{\mathbb{R}^n} e^{-f_n(\phi)} d \phi} \\
& \leq \frac{\int_{\mathbb{R}^n} e^{-\frac{(n-1) \lambda_2}{2} \sum_{i=1}^n\left(\phi_i-t\right)^2}\sum_{i=1}^n (\phi_i-t)^2 1_{\left\{\sum_{i=1}^n\left(\phi_i-t\right)^2>M\right\}} d \phi}{\int_{\mathbb{R}^n} e^{-\frac{(n-1) \lambda_1}{2} \sum_{i=1}^n\left(\phi_i-t\right)^2} d \phi} \\
& \leq\left(\frac{\lambda_1}{\lambda_2}\right)^{\frac{n}{2}} \frac{1}{(n-1)\lambda_2}\E \chi_n^2 1_{\{\chi_n^2 \geq(n-1) \lambda_2 M\}},
\end{aligned}
$$
where $\chi_n^2$ is a chi-square random variable with $n$ degrees of freedom.
From the moment generating function of $\chi_n^2$ ($\E e^{t \chi_n^2}=1/(1-2t)^{n/2}$ for $0<t<1/2$) and Markov's inequality, we obtain the desired result \cref{eq:truncation} for sufficently large $M$.

By \cite[Lemma~2.2(a)]{mukherjee2023statistics} (their $a_1=\wt a_1+O(1/n)$), we have
\begin{equation*}
    \sqrt{n} \left[ \sum_{i=1}^n (\phi_i-\bar \phi)^2-\frac{1}{\wt a_1} \right]\to N(0, \frac{1}{2\wt a_1^2})\ \text{in distribution},
\end{equation*}
where $\bar \phi:=(\phi_1+\dots+\phi_n)/n$. This implies
\begin{equation}\label{eq:claim1}
    \sqrt{n} \left[ \sum_{i=1}^n (\phi_i-t)^2+n(t-\bar \phi)^2-\frac{1}{\wt a_1} \right]\to N(0, \frac{1}{2\wt a_1^2})\ \text{in distribution}.
\end{equation}
By \cite[Lemma~3.3]{mukherjee2023statistics}, we have
\begin{equation}\label{eq:claim2}
    \E[(\bar \phi-t)]^2\leq \frac{C}{n^2}.
\end{equation}
From \cref{eq:claim1} and \cref{eq:claim2}, we have
\begin{equation}\label{eq:claim3}
\sqrt{n}\left[\sum_{i=1}^n (\phi_i-t)^2-\frac{1}{\wt a_1}    \right]\to N(0, \frac{1}{2\wt a_1^2})\ \text{in distribution}.
\end{equation}
From \cref{eq:truncation} and \cref{eq:claim3}, we have
\begin{equation*}
    \E \sum_{i=1}^n (\phi_i-t)^2\to \frac{1}{\wt a_1}.
\end{equation*}
By symmetry, we have $n\E(\phi_i-t)^2\to 1/\wt a_1$,
which implies (recall $t=2p-1+O(1/n)$)
\begin{equation}\label{eq:claim4}
    n \E\left(\frac{2d_i(Y)}{n-1}-2p+\frac{W_i}{\sqrt{n\wt \beta_2/2}}  \right)^2\to \frac{1}{\wt a_1}.
\end{equation}
Moreover, the tightness of the sequence $\{n(\phi_i-t)^2\}_{n=1}^\infty$ (see \cite[Lemma~3.3(c)]{mukherjee2023statistics}) implies the tightness of the sequence $\{n(\frac{2d_i(Y)}{n-1}-2p+\frac{W_i}{\sqrt{n\wt \beta_2/2}}  )^2\}_{n=1}^\infty$.
Eq.\cref{eq:claim4}, together with the tightness and symmetry, leads to the fact that for any $\varepsilon>0$, there exists $M>0$, such that (recall the definition of $h_n$ in \cref{eq:defhn})
\begin{equation}\label{eq:claim5}
    \liminf_{n\to \infty} \E h_n(Y)\geq \frac{1}{\wt a_1}-\varepsilon,
\end{equation}
proving the claim.
\end{proof}



\section*{Acknowledgements}

Ding W.~Y. was partially supported by the National Social Science Fund of China 20CTQ005.
Fang X. was partially supported by Hong Kong RGC GRF 14305821, 14304822, 14303423 and a CUHK direct grant.

\bibliographystyle{apalike}
\bibliography{reference}

\begin{thebibliography}{}

\bibitem[Bhamidi et~al., 2011]{bhamidi2011mixing}
Bhamidi, S., Bresler, G., and Sly, A. (2011).
\newblock Mixing time of exponential random graphs.
\newblock {\em The Annals of Applied Probability}, 21(6):2146--2170.

\bibitem[Bresler and Nagaraj, 2018]{bresler2018optimal}
Bresler, G. and Nagaraj, D. (2018).
\newblock Optimal single sample tests for structured versus unstructured
  network data.
\newblock In {\em Conference On Learning Theory}, pages 1657--1690. PMLR.

\bibitem[Bresler and Nagaraj, 2019]{bresler2019stein}
Bresler, G. and Nagaraj, D. (2019).
\newblock Stein’s method for stationary distributions of markov chains and
  application to {I}sing models.
\newblock {\em The Annals of Applied Probability}, 29(5):3230--3265.

\bibitem[Chatterjee and Diaconis, 2013]{chatterjee2013estimating}
Chatterjee, S. and Diaconis, P. (2013).
\newblock Estimating and understanding exponential random graph models.
\newblock {\em The Annals of Statistics}, 41(5):2428--2461.

\bibitem[Chatterjee and Varadhan, 2011]{chatterjee2011large}
Chatterjee, S. and Varadhan, S.~S. (2011).
\newblock The large deviation principle for the {E}rd{\H{o}}s-{R}{\'e}nyi
  random graph.
\newblock {\em European Journal of Combinatorics}, 32(7):1000--1017.

\bibitem[Ganguly and Nam, 2021]{ganguly2019sub}
Ganguly, S. and Nam, K. (2021).
\newblock Sub-critical exponential random graphs: concentration of measure and
  some applications.
\newblock {\em arXiv preprint arXiv:1909.11080v2, to appear in Transactions of
  the American Mathematical Society}.

\bibitem[Glauber, 1963]{glauber1963time}
Glauber, R.~J. (1963).
\newblock Time-dependent statistics of the ising model.
\newblock {\em Journal of Mathematical Physics}, 4(2):294--307.

\bibitem[Hoeffding, 1948]{hoeffding1948}
Hoeffding, W. (1948).
\newblock A class of statistics with asymptotically normal distribution.
\newblock {\em The Annals of Mathematical Statistics}, 19(3):293--325.

\bibitem[Janson and Nowicki, 1991]{janson1991asymptotic}
Janson, S. and Nowicki, K. (1991).
\newblock The asymptotic distributions of generalized {U}-statistics with
  applications to random graphs.
\newblock {\em Probability Theory and Related Fields}, 90:341--375.

\bibitem[Mukherjee and Xu, 2023]{mukherjee2023statistics}
Mukherjee, S. and Xu, Y. (2023).
\newblock Statistics of the two star {ERGM}.
\newblock {\em Bernoulli}, 29(1):24--51.

\bibitem[Park and Newman, 2004]{park2004statistical}
Park, J. and Newman, M.~E. (2004).
\newblock Statistical mechanics of networks.
\newblock {\em Physical Review E}, 70(6):066117.

\bibitem[Reinert and Ross, 2019]{reinert2019approximating}
Reinert, G. and Ross, N. (2019).
\newblock Approximating stationary distributions of fast mixing {G}lauber
  dynamics, with applications to exponential random graphs.
\newblock {\em The Annals of Applied Probability}, 29(5):3201--3229.

\bibitem[Sambale and Sinulis, 2020]{sambale2020logarithmic}
Sambale, H. and Sinulis, A. (2020).
\newblock Logarithmic {S}obolev inequalities for finite spin systems and
  applications.
\newblock {\em Bernoulli}, 26(3):1863--1890.

\bibitem[Wasserman and Faust, 1994]{wasserman1994social}
Wasserman, S. and Faust, K. (1994).
\newblock {\em Social network analysis: Methods and applications}.
\newblock Cambridge University Press.

\end{thebibliography}

\end{document}